\documentclass[12pt]{amsart}
\usepackage{amssymb}
\usepackage{mathrsfs}
\textwidth=6.5in
\oddsidemargin=0in
\evensidemargin=0in
\topmargin=0in
\textheight=8.5in
\newtheorem{theorem}{Theorem}

\newtheorem{definition}[theorem]{Definition}
\newtheorem{remark}[theorem]{Remark}
\newtheorem{corollary}[theorem]{Corollary}

\newcommand{\M}{\mbox{${\mathbb{M}}$}}
\newcommand{\A}{\mbox{${\mathcal A}$}}
\newcommand{\Br}{\mbox{${\mathcal B}$}}
\newcommand{\Q}{\mbox{${\mathcal{QM}}$}}
\begin{document}
\title[The unit ball of an injective operator space has an extreme point]{The unit ball of an injective operator space has an extreme point}
\author{Masayoshi~Kaneda}
\address{Department of Mathematics and Natural Sciences, College of Arts and Sciences, American University of Kuwait, P.O. Box 3323, Safat 13034 Kuwait}
\email{mkaneda@uci.edu}
\date{\today}
\thanks{{\em Mathematics subject classification 2010.} Primary 47L07, 46M10; Secondary 47L25, 46L07, 46L45, 47L20, 46H10, 47L30}
\thanks{{\em Key words and phrases.} Extreme point, injective operator space, ternary ring of operators (TRO), ideal decomposition, quasi-identity, algebrization, $AW^*$-algebra, monotone complete $C^*$-algebra}
\begin{abstract}We define an $AW^*$-TRO as an off-diagonal corner of an $AW^*$-algebra, and show that the unit ball of an $AW^*$-TRO has an extreme point. In particular, the unit ball of an injective operator space has an extreme point, which answers a question raised in \cite{Kaneda 2016} affirmatively. We also show that an $AW^*$-TRO (respectively, an injective operator space) has an ideal decomposition, that is, it can be decomposed into the direct sum of a left ideal, a right ideal, and a two-sided ideal in an $AW^*$-algebra (respectively, an injective $C^*$-algebra). In particular, we observe that $AW^*$-TRO, hence an injective operator space, has an algebrization which admits a quasi-identity.
\end{abstract}
\maketitle
Recall that an operator space $X$ is called a \emph{triple system} or a \emph{ternary ring of operators} (\emph{TRO} for short) if there exists a complete isometry $\iota$ from $X$ into a $C^*$-algebra such that $\iota(x)\iota(y)^*\iota(z)\in\iota(X)$ for all $x,y,z\in X$. A theorem of Ruan and Hamana (independently) states that an operator space $X$ is injective if and only if it is an off-diagonal corner of an injective $C^*$-algebra, i.e., there exist an injective $C^*$-algebra $\A$ and projections $p,q\in\A$ (meaning $p=p^2=p^*$ and $q=q^2=q^*$) such that $X$ is completely isometric to $p\A q$ (Theorem~4.5 in \cite{Ruan 1989} and Theorem~3.2~(i) in \cite{Hamana 1999}). In particular, an injective operator space is a TRO. Noting that an injective $C^*$-algebra is monotone complete and hence an $AW^*$-algebra, the Ruan-Hamana theorem motivates the following definition. (The reader is referred to \cite{Saito & Wright 2015} for a modern account of and recent progress in monotone complete $C^*$-algebras and $AW^*$-algebras.)
\begin{definition}\label{de:AW-TRO}We say that an operator space $X$ is an \textbf{\em $AW^*$-TRO} if there exist an $AW^*$-algebra $\A$ and projections $p,q\in\A$ such that $X$ is completely isometric to $p\A q$.
\end{definition}
\begin{remark}{\em
\begin{enumerate}
\item Our definition of an $AW^*$-TRO is weaker than the one given in \cite{Pluta 2013}~(Definition~6.2.1) where an $AW^*$-TRO is defined as a TRO whose linking $C^*$-algebra is an $AW^*$-algebra. This condition is so strong that even some injective operator spaces fail to be $AW^*$-TROs in this sense. For instance, a countably-infinite-dimensional column Hilbert space is an injective operator space (\cite{Robertson 1991}) and hence a TRO, however, its linking $C^*$-algebra is not unital, and so is not an $AW^*$-algebra. In our belief, disqualifying an injective operator space, which is an off-diagonal corner of an $AW^*$-algebra, from being an $AW^*$-TRO is not befitting to its name, so in this paper we use the term ``$AW^*$-TRO'' in the sense of Definition~\ref{de:AW-TRO} above, and hence an injective operator space is an $AW^*$-TRO. Also this definition is consistent with that of a $W^*$-TRO which is an off-diagonal corner of a $W^*$-algebra but its linking $C^*$algebra need not be a $W^*$-algebra.
\item With this modified definition of $AW^*$-algebras and $\mathscr{L}_T:=\begin{bmatrix}p\A p&p\A q\\q\A p&q\A q\end{bmatrix}\subseteq\M_2(\A)$, where $\A$, $p$, and $q$ are as in Definition~\ref{de:AW-TRO}, all Theorems, Corollaries, and Lemmas in Sections~6.2 and 6.3 of \cite{Pluta 2013} are valid except for Statement~6.2.2 and Corollary~6.2.6 there.
\end{enumerate}}
\end{remark}
\begin{theorem}\label{th:extpt}The unit ball (always assumed to be norm-closed) of an $AW^*$-TRO has an extreme point. In particular, the unit ball of an injective operator space has an extreme point, which answers a question raised in \cite{Kaneda 2016} (Question~2) affirmatively.
\end{theorem}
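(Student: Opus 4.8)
The plan is to exhibit an explicit extreme point of the unit ball of $X$, namely a \emph{complete tripotent}. Realize $X$ as $p\A q$ with $\A$ an $AW^{*}$-algebra and $p,q$ projections in $\A$; we may assume $X\neq\{0\}$, so $p,q\neq 0$. Call $v\in X$ a tripotent if $vv^{*}v=v$ --- equivalently $v$ is a partial isometry with $e:=vv^{*}\le p$ and $f:=v^{*}v\le q$ --- and a complete tripotent if moreover $(p-e)\A(q-f)=\{0\}$. First I would record the (essentially known) fact that \emph{every complete tripotent $v$ is an extreme point of the unit ball of $X$}. The tool is the Peirce decomposition of $X$ relative to $v$, namely $X=X_{2}\oplus X_{1}\oplus X_{0}$ with $X_{2}=e\A f$, $X_{0}=(p-e)\A(q-f)$, and $X_{1}=e\A(q-f)\oplus(p-e)\A f$, whose associated linear projections $P_{2},P_{1},P_{0}$ are all contractive (for $P_{1}$ this is because the off-diagonal part of a $2\times2$ block operator is half the difference of the operator and its conjugate by $1\oplus(-1)$). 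Since $X_{2}=e\A f$ carries a unital $C^{*}$-structure with identity $v$ --- transport that of $e\A e$ along the isometry $x\mapsto xv^{*}$ --- the element $v$ is extreme in the unit ball of $X_{2}$; so if $v=\frac{1}{2}(a+b)$ with $a,b$ in the unit ball of $X$, then applying $P_{2}$ gives $P_{2}a=P_{2}b=v$ and applying $P_{0}$ gives $P_{0}a=P_{0}b=0$, after which writing $a=v+P_{1}a$ and reading off $a^{*}a\le 1$ and $aa^{*}\le 1$ in $2\times2$ block form forces $P_{1}a=0$, so $a=b=v$.

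The substantive step is to produce a complete tripotent in $X$, and here I would invoke the comparison theory of projections, which holds in $AW^{*}$-algebras: there is a central projection $z\in\A$ with $zp\precsim zq$ and $(1-z)q\precsim(1-z)p$. From $zp\precsim zq$ choose a partial isometry $w_{1}\in\A$ with $w_{1}w_{1}^{*}=zp$ and $w_{1}^{*}w_{1}\le zq$; since $z$ is central one checks $w_{1}\in zp\,\A\,zq=zX$, and because the left support of $w_{1}$ exhausts $zp$ its Peirce-$0$ space inside $zX$ is $\{0\}$, i.e.\ $w_{1}$ is a complete tripotent of $zX$. Symmetrically, from $(1-z)q\precsim(1-z)p$ choose $w_{2}\in\A$ with $w_{2}^{*}w_{2}=(1-z)q$ and $w_{2}w_{2}^{*}\le(1-z)p$; then $w_{2}\in(1-z)X$ is a complete tripotent of $(1-z)X$, its right support exhausting $(1-z)q$. (If $zX=\{0\}$ put $w_{1}=0$, and likewise for the other summand; since $X\neq\{0\}$ the two summands are not both $\{0\}$.)

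Finally I would set $v:=w_{1}+w_{2}\in X$ and verify it is a complete tripotent of $X$. Because $z$ is central with $w_{1}\in z\A$ and $w_{2}\in(1-z)\A$, all cross products $w_{i}w_{j}^{*}$ (and $w_{i}^{*}w_{j}$) with $i\neq j$ vanish, so $vv^{*}v=w_{1}w_{1}^{*}w_{1}+w_{2}w_{2}^{*}w_{2}=v$, with $vv^{*}=zp+w_{2}w_{2}^{*}$ and $v^{*}v=w_{1}^{*}w_{1}+(1-z)q$. Hence $p-vv^{*}=(1-z)p-w_{2}w_{2}^{*}$ lies under $1-z$ while $q-v^{*}v=zq-w_{1}^{*}w_{1}$ lies under $z$, so every product $(p-vv^{*})\,a\,(q-v^{*}v)$ with $a\in\A$ factors through $(1-z)\A z=\{0\}$ and therefore vanishes. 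Thus $v$ is a complete tripotent of $X$ and so an extreme point of its unit ball. This proves the first assertion, and the case of an injective operator space then follows from the Ruan--Hamana theorem together with the fact, recalled above, that an injective $C^{*}$-algebra is monotone complete, hence an $AW^{*}$-algebra.

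I expect the main obstacle to be exactly the step of locating a ``large'' partial isometry inside the corner $p\A q$. Over a von Neumann algebra this is painless --- the unit ball of $p\A q$ is weak-$*$ compact, so Krein--Milman applies, or one runs a Zorn argument on partial isometries using strong limits --- but an $AW^{*}$-algebra is typically not a dual space and carries no such limits, so the purely algebraic comparison theory of projections has to substitute for the topological input. The attendant care is to ensure that the equivalences $zp\sim q'\le zq$ and $(1-z)q\sim p'\le(1-z)p$ are realized by partial isometries lying in $p\A q$, and that the two complete tripotents so obtained, sitting in complementary central summands, reassemble into a genuine complete tripotent of $X$ and not merely a partial isometry.
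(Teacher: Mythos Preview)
Your proof is correct and follows essentially the same route as the paper: use comparison of projections in the $AW^{*}$-algebra $\A$ to build a partial isometry $v\in p\A q$ with $(p-vv^{*})\A(q-v^{*}v)=\{0\}$, then invoke Kadison's extremality criterion. The only cosmetic differences are that the paper splits into three central pieces $r,t,l$ (strict inequality, equivalence, strict inequality) rather than your two, and cites Pedersen/Sakai for the Kadison step where you supply a direct Peirce-decomposition argument; the three-way split is not needed for this theorem but is used later in the paper for the ideal decomposition.
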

\begin{proof}Let $X$ be an $AW^*$-TRO. We may assume that $X=p\A q$, where $\A$ is an $AW^*$-algebra and $p,q\in\A$ are projections. By the comparison theorem in \cite{Heunen & Reyes 2013}, there exist unique central projections $r,t,l\in\A$ satisfying $r+t+l=1$ such that $rp\prec rq$, $tp\sim tq$, and $lp\succ lq$. (Here $rp\prec rq$ means $rp\preceq rq$ but $rp\nsim rq$, however, $0\prec0$ is allowed.) That is, there exist partial isometries $u,v,w\in\A$ such that $uu^*=rp$, $u^*u\le rq$, $vv^*=tp$, $v^*v=tq$, $ww^*\le lp$, and $w^*w=lq$. Let $e:=u+v+w\,(\in p\A q)$, then it is easy to check that $(p-ee^*)\A(q-e^*e)=\{0\}$. Thus by a variation of Kadison's theorem (Theorem~1 in \cite{Kadison 1951}; see Proposition~1.4.8 in \cite{Pedersen 1979} or Proposition~1.6.5 in \cite{Sakai 1971} for the variation we need here), $e$ is an extreme point of the unit ball of $p\A q$.
\end{proof}From the proof above we obtain ``ideal decompositions'' for $AW^*$-TROs and injective operator spaces similar to the ones done for TROs with predual in \cite{Kaneda 2013}. The technique we use here is to embed an off-diagonal corner into the diagonal corners which is a modification of the technique developed in \cite{Blecher & Kaneda 2004} and is employed in \cite{Kaneda 2013}.
\begin{corollary}\label{co:ideal decomposition}An $AW^*$-TRO (respectively, injective operator space) can be decomposed into the direct sum of TROs $X_T$, $X_L$, and $X_R$:$$X=X_T\stackrel{\infty}{\oplus}X_L\stackrel{\infty}{\oplus}X_R$$ so that there is a complete isometry $\iota$ from $X$ into an $AW^*$-algebra (respectively, an injective $C^*$-algebra) in which $\iota(X_T)$, $\iota(X_L)$, and $\iota(X_R)$ are a two-sided, left, and right ideal, respectively, and$$\iota(X)=\iota(X_T)\stackrel{\infty}{\oplus}\iota(X_L)\stackrel{\infty}{\oplus}\iota(X_R).$$
\end{corollary}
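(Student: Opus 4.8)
The plan is to read the decomposition straight off the proof of Theorem~\ref{th:extpt} and then ``straighten'' each of the three resulting pieces into an ideal by embedding an off-diagonal corner into a diagonal one. I keep the notation of that proof: $X=p\A q$, where $\A$ is an $AW^*$-algebra (respectively, by the Ruan--Hamana theorem, an injective $C^*$-algebra) and $p,q\in\A$ are projections; $r,t,l\in\A$ are the unique central projections with $r+t+l=1$, $rp\prec rq$, $tp\sim tq$, $lp\succ lq$; and $u,v,w\in p\A q$ are the associated partial isometries, so that $uu^*=rp$, $u^*u\le rq$, $vv^*=tp$, $v^*v=tq$, $ww^*\le lp$, and $w^*w=lq$. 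Since $r,t,l$ are orthogonal central projections, $X=X_T\stackrel{\infty}{\oplus}X_L\stackrel{\infty}{\oplus}X_R$ with $X_T:=tp\A tq$, $X_L:=lp\A lq$, $X_R:=rp\A rq$, and each summand, being an off-diagonal corner of a $C^*$-algebra, is a TRO.

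Next I would introduce the maps $\chi\colon X_T\to\A$, $\chi(x):=v^*x$; $\psi\colon X_L\to\A$, $\psi(x):=xw^*$; and $\phi\colon X_R\to\A$, $\phi(x):=u^*x$. The partial-isometry relations make $x\mapsto vx$, $x\mapsto xw$, and $x\mapsto ux$ left inverses of $\chi$, $\psi$, $\phi$ respectively, so all three (together with their matricial amplifications) are complete isometries, and a short computation with the same relations identifies their images:
\[
\chi(X_T)=tq\A tq,\qquad \psi(X_L)=(lp\A lp)(ww^*),\qquad \phi(X_R)=(u^*u)(rq\A rq).
\]
Write $\A_T:=tq\A tq$, $\A_L:=lp\A lp$, $\A_R:=rq\A rq$. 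Then $\chi(X_T)$ is the \emph{whole} corner $C^*$-algebra $\A_T$; $\psi(X_L)$ is the principal left ideal $\A_L(ww^*)$ of $\A_L$, cut out on the right by the projection $ww^*\le lp$; and $\phi(X_R)$ is the principal right ideal $(u^*u)\A_R$ of $\A_R$, cut out on the left by the projection $u^*u\le rq$. Passing to the corners is exactly what turns these into one-sided ideals: the residual projection $rq$ on the right of $\phi(X_R)$ and the residual projection $ww^*$ on the left of $\psi(X_L)$ are absorbed there, so $\phi(X_R)\A_R\subseteq\phi(X_R)$ and $\A_L\psi(X_L)\subseteq\psi(X_L)$, whereas inside $\A$ itself these sets are not one-sided ideals.

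Finally I would put $\Br:=\A_T\stackrel{\infty}{\oplus}\A_L\stackrel{\infty}{\oplus}\A_R$ and define $\iota\colon X\to\Br$ by $\iota(x):=\chi(tx)\oplus\psi(lx)\oplus\phi(rx)$, where $x=tx+lx+rx$ is the decomposition along $X=X_T\stackrel{\infty}{\oplus}X_L\stackrel{\infty}{\oplus}X_R$. Orthogonality and centrality of $r,t,l$ give $\|x\|=\max\{\|tx\|,\|lx\|,\|rx\|\}$ at every matrix level, so $\iota$ is a complete isometry and $\iota(X)=\A_T\stackrel{\infty}{\oplus}\psi(X_L)\stackrel{\infty}{\oplus}\phi(X_R)=\iota(X_T)\stackrel{\infty}{\oplus}\iota(X_L)\stackrel{\infty}{\oplus}\iota(X_R)$. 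Since a direct summand of $\Br$ is a (closed) two-sided ideal, $\iota(X_T)=\A_T\oplus 0\oplus 0$ is a two-sided ideal; and since multiplication in $\Br$ is coordinatewise, a one-sided ideal of a single summand is a one-sided ideal of $\Br$, so $\iota(X_L)=0\oplus\psi(X_L)\oplus 0$ is a left ideal and $\iota(X_R)=0\oplus 0\oplus\phi(X_R)$ a right ideal of $\Br$. It remains only to observe that $\Br$ has the right type: the corners $\A_T=tq\A tq$, $\A_L=lp\A lp$, $\A_R=rq\A rq$ of $\A$ are again $AW^*$-algebras (respectively, off-diagonal corners of the injective $C^*$-algebra $\A$, hence injective operator spaces by the Ruan--Hamana theorem, and, being unital $C^*$-algebras, injective $C^*$-algebras), and a finite $\ell^\infty$-direct sum of such is again an $AW^*$-algebra (respectively, an injective $C^*$-algebra).

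All of this consists of elementary $C^*$-algebra manipulations; the two points that call for care are the bookkeeping that makes $\phi(X_R)$ and $\psi(X_L)$ genuine one-sided ideals of the \emph{full} algebra $\Br$ (handled by passing to the corners, so that the residual projections get absorbed, and then using coordinatewise multiplication) and the permanence facts that corners and finite direct sums stay $AW^*$ (respectively, injective); the latter is the point I would cite most carefully, the rest being routine. As a byproduct, since $\psi(X_L)$ and $\phi(X_R)$ are closed under multiplication (being one-sided ideals) and lie in orthogonal summands, $\iota(X)$ is a subalgebra of $\Br$, which is the promised algebrization of $X$; and $\iota(u+v+w)=tq\oplus ww^*\oplus u^*u$ then serves as a quasi-identity for it.
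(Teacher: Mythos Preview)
Your argument is correct and follows the same strategy as the paper: decompose $X$ via the central projections $r,t,l$ from the comparison theorem, then use the partial isometries $u,v,w$ to push each off-diagonal piece into a diagonal corner where it becomes a one-sided (or two-sided) ideal. The only differences are cosmetic packaging: the paper takes $\Br:=p\A p\stackrel{\infty}{\oplus}q\A q$ and defines $\iota(x):=(x_T+x_L)e^*\oplus e^*x_R$ with $e=u+v+w$, so $X_T$ and $X_L$ land together in $p\A p$ (as $X_Tv^*=tp\A tp$ and $X_Lw^*=\A_L(ww^*)$) while $X_R$ goes to $q\A q$; you instead split into three summands $\A_T\stackrel{\infty}{\oplus}\A_L\stackrel{\infty}{\oplus}\A_R$ and send $X_T$ to the $q$-side via $v^*x$ rather than the $p$-side via $xv^*$. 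Both choices give complete isometries with images of the required ideal type, and both target algebras are $AW^*$ (respectively injective) by the same corner-and-direct-sum permanence facts you cite.
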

\begin{proof}Let $X$ be an $AW^*$-TRO, and assume that $X=p\A q$, where $\A$ is an $AW^*$-algebra and $p,q\in\A$ are projections. Let $r,t,l\in p\A q$ as in the proof of Theorem~\ref{th:extpt}, and put $X_T:=tX$, $X_L:=lX$, and $X_R:=rX$, then $X=X_T\stackrel{\infty}{\oplus}X_L\stackrel{\infty}{\oplus}X_R$. Let $\Br:=p\A p\stackrel{\infty}{\oplus}q\A q$ which is an $AW^*$-algebra, since $p\A p$ and $q\A q$ are so by Theorem~2.4 in \cite{Kaplansky 1951}. For each $x\in X$, let $x_T:=tx$, $x_L:=lx$, and $x_R:=rx$, and define a mapping $\iota:X\to\Br$ by $\iota(x):=(x_T+x_L)e^*\oplus e^*x_R$, then clearly $\iota(X)=\iota(X_T)\stackrel{\infty}{\oplus}\iota(X_L)\stackrel{\infty}{\oplus}\iota(X_R)$. We claim that $\iota$ is a complete isometry. $\|\iota(x)\|=\max\{\|(x_T+x_L)e^*\|,\|e^*x_R\|\}=\max\{\|(x_T+x_L)e^*e(x_T+x_L)^*\|^{1/2},\|x_R^*ee^*x_R\|^{1/2}\}
=\max\{\|x_Tv^*vx_T^*+x_Lw^*wx_L^*\|^{1/2},\|x_R^*uu^*x_R\|^{1/2}\}=\max\{\|xtx^*+xlx^*\|^{1/2},\|x^*rx\|^{1/2}\}
=\max\{\|(t+l)x\|,\|rx\|\}=\|(t+l+r)x\|=\|x\|$, which shows that $\iota$ is an isometry. A similar calculation works at the matrix level, which concludes that $\iota$ is a complete isometry. Clearly, $\iota(X_T)$, $\iota(X_L)$, and $\iota(X_R)$ are respectively a two-sided, left, and right ideals in $\Br$, and thus we are done. The proof in the case that $X$ is an injective operator space is identical noting that $\Br$ is an injective $C^*$-algebra in this case.
\end{proof}
\begin{remark}{\em
\begin{enumerate}
  \item In the proof above it is also possible to define $\iota:X\to\Br$ by $\iota(x):=x_Le^*\oplus e^*(x_R+x_T)$ for $x\in X$.
  \item A TRO $X$ with predual can be considered as an off-diagonal corner of a von Neumann algebra $\A$ (see the beginning of the proof of the Theorem in  \cite{Kaneda 2013}), thus the above argument gives an alternate and simpler proof of the Theorem in \cite{Kaneda 2013} noting that $\Br$ in the above proof is a von Neumann algebra in this case. The simplicity of this alternate proof is attributed to the use of the comparison theorem for projections.
\end{enumerate}}
\end{remark}The following corollary is straightforward from the corollary above. The reader is referred to \cite{Kaneda 2003}, \cite{Kaneda 2007}, or \cite{Kaneda & Paulsen 2004} for quasi-multipliers and algebrizatioins of operator spaces, and Definition~4.2~(i) in \cite{Kaneda 2016} for quasi-identities.
\begin{corollary}An $AW^*$-TRO, hence an injective operator space, has an algebrization which admits a quasi-identity of norm $1$.
\end{corollary}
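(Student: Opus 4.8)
The plan is to read the algebrization directly off the extreme point $e=u+v+w$ constructed in the proof of Theorem~\ref{th:extpt}, and to take $e$ itself as the quasi-identity. Retaining the notation of that proof — $X=p\A q$ inside an $AW^*$-algebra (resp.\ injective $C^*$-algebra) $\A$, central projections $r+t+l=1$, partial isometries $u,v,w\in p\A q$ with $uu^*=rp$, $u^*u\le rq$, $vv^*=tp$, $v^*v=tq$, $ww^*\le lp$, $w^*w=lq$ — one notes that $e^*\in q\A p$ has norm one, so the bilinear map $(x,y)\mapsto xe^*y$ carries $X\times X$ into $p\A q\cdot q\A p\cdot p\A q\subseteq p\A q=X$, is associative, and is completely contractive; hence $x\cdot y:=xe^*y$ turns $X$ into a completely contractive operator algebra, and since $e^*$ represents a norm-one element of the quasi-multiplier space $\Q(X)$ this is an algebrization of $X$ in the sense of \cite{Kaneda 2003,Kaneda 2007,Kaneda & Paulsen 2004}. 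It remains to see that $e$ is a quasi-identity of $(X,\cdot)$ of norm one.

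For the quasi-identity property I would use the picture supplied by Corollary~\ref{co:ideal decomposition}: under the embedding $\iota$ there, $(X,\cdot)$ becomes the operator algebra $\iota(X)\subseteq\Br=p\A p\stackrel{\infty}{\oplus}q\A q$, in which $\iota(X_T)=tp\A tp$ is a unital $C^*$-subalgebra of $p\A p$ (cut out by the central projection $tp$, with identity $\iota(v)=tp$), $\iota(X_L)=lp\A ww^*$ is a closed left ideal of $p\A p$ with right identity $\iota(w)=ww^*$, and $\iota(X_R)=u^*u\A rq$ is a closed right ideal of $q\A q$ with left identity $\iota(u)=u^*u$; the three summands annihilate one another, and $(X,\cdot)=(X_T,\cdot)\stackrel{\infty}{\oplus}(X_L,\cdot)\stackrel{\infty}{\oplus}(X_R,\cdot)$. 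A short computation with the orthogonal central projections then shows that $e=v+w+u$ acts as a two-sided identity on $X_T$, a right identity on $X_L$, and a left identity on $X_R$; equivalently, the completely contractive idempotents $L_e\colon x\mapsto e\cdot x=ee^*x$ and $R_e\colon x\mapsto x\cdot e=xe^*e$ commute and satisfy $(\mathrm{id}_X-L_e)(\mathrm{id}_X-R_e)=0$, and this last identity is precisely the relation $(p-ee^*)\A(q-e^*e)=\{0\}$ already verified inside the proof of Theorem~\ref{th:extpt}. This is the defining property of a quasi-identity in Definition~4.2~(i) of \cite{Kaneda 2016}. Finally $e^*e=u^*u+v^*v+w^*w$ is an orthogonal sum of projections, hence a projection, which is nonzero as soon as $X\ne\{0\}$; so $\|e\|=1$, giving a quasi-identity of norm $1$.

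The central-projection bookkeeping, associativity, and the norm computation are routine; the one point that genuinely needs care — the main obstacle — is the interface with the quasi-multiplier formalism: one must verify that $e^*$ indeed represents an element of the unit ball of $\Q(X)$, so that $x\cdot y=xe^*y$ qualifies as an algebrization, and that the relation $(\mathrm{id}_X-L_e)(\mathrm{id}_X-R_e)=0$ matches Definition~4.2~(i) of \cite{Kaneda 2016} exactly. In the injective case this is immediate, since $\A$ may be taken to contain the relevant corner of the injective envelope of the linking $C^*$-algebra; for a general $AW^*$-TRO one first passes to the triple (equivalently injective) envelope of $X$, observes that $e$, the central projections $r,t,l$, and the whole decomposition persist there, and records that $\|e^*\|_{\Q(X)}=\|e\|=1$, after which the identification with Definition~4.2~(i) is formal.
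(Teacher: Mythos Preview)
Your argument is correct and is essentially the paper's: the paper's one-line proof exhibits $(v+w)e^*\oplus e^*u=\iota(e)$ as a quasi-identity of $\iota(X)$, which is precisely your quasi-identity $e$ transported through the complete isometry $\iota$ of Corollary~\ref{co:ideal decomposition}, so the two algebrizations coincide. Your closing paragraph on locating $e^*$ in $\Q(X)$ via the injective envelope is more elaborate than needed---the paper's Remark following the corollary simply invokes the general inclusion $X^*\subseteq\Q(X)$ valid for any TRO.
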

\begin{proof}It is straightforward to check that $(v+w)e^*\oplus e^*u$ serves as a quasi-identity of $\iota(X)$ in the proof of Corollary~\ref{co:ideal decomposition}.
\end{proof}
\begin{remark}An element $e^*\in X^*$ in the proof of Corollary~\ref{co:ideal decomposition} can be identified as a quasi-multiplier of $X$ noting that $X^*\subseteq\Q(X)$ if $X$ is a TRO, where $\Q(X)$ is the quasi-multiplier space of $X$, and $\iota$ is the ``algebrization'' by $e^*$.
\end{remark}
\section*{Acknowledgment}The author wishes to express his gratitude to Professor~Kazuyuki~Sait\^{o} for finding and fixing a gap in the initial draft of the proof of Theorem~\ref{th:extpt}.

\end{document}